\def\cal{\mathcal}
\def\to{{\longrightarrow}}
\def\E{{\mathcal E}}
\def\T{{\mathcal T}}
\def\N{{\mathcal N}}
\newcommand\ddelta\bigtriangledown
\newcommand\ld\lambda
\newcommand\Ld\Lambda
\def \bR {\Bbb R}
\newtheorem{remark}{Remark}[section]
\title{A  $C^0$ linear finite element method  for  sixth order elliptic equations}
\author{Hailong Guo\thanks{ Department of Mathematics, University of California Santa Barbara, CA, 93106 (hlguo@math.ucsb.edu). }
    \and Zhimin Zhang \thanks{Beijing Computational Science Research Center, Beijing 100094 and Department of Mathematics, Wayne State University, Detroit, MI 48202 (zzhang@math.wayne.edu). The research of this author was supported in part by the National Natural Science Foundation of China under grants 11471031, 91430216, U1530401, and the U.S. National Science Foundation through grant DMS-1419040.}
 \and Qingsong Zou\thanks{Corresponding author. School of Data and Computers and Guangdong Province Key Laboratory of Computational Science, Sun Yat-sen University, Guangzhou 510275 (mcszqs@mail.sysu.edu.cn). This author was partially supported  by
          the National Natural Science Foundation of China through grants 11571384 and 11428103,  by  Guangdong Provincial Natural
Science Foundation of China through grant 2014A030313179, and by the Fundamental Research Funds for the Central Universities through the grant 16lgjc80.}}
\begin{document}

\maketitle

%
%
\medskip

\begin{abstract} In this paper, we develop a  straightforward $C^0$ linear finite element method for sixth-order elliptic equations.
The basic idea  is to use gradient recovery techniques to generate  higher-order numerical derivatives from a $C^0$ linear finite element function.
 Both theoretical analysis and numerical experiments show that the proposed method has  the optimal convergence rate under the energy norm. The method avoids complicated construction of conforming $C^2$ finite element basis or  nonconforming penalty terms and has a low
 computational cost.

\vskip .7cm
{\bf AMS subject classifications.} \ {Primary 65N30; Secondary 45N08}

\vskip .3cm

{\bf Key words.} \ {Sixth-order equation, Gradient recovery, Linear finite element.}
\end{abstract}

\section{Introduction}
\setcounter{equation}{0}
Partial differential equations (PDEs) with order higher than 2 have been
 widely used to describe different physical laws in material sciences \cite{Cahn1961,CaginalpFife1986,HilliardCahn1958,HilliardCahn1959,WiseLowengrub2004},
elastic mechanics\cite{VentselKrauthammer2001}, quantum mechanics\cite{FushchychSymenoh1997}, plasma physics\cite{BiskampSchwarzDrake1995,BiskampSchwarzZeilerCelani1999,DrakeBiskampZeiler1991}, differential geometry \cite{ChangChen2001, Ugail2011},
 and other areas of science and engineering. Comparing with  the second-order PDEs, higher order PDEs are much less studied, including some fundamental
theoretical issues such as existence, uniqueness, and regularity of solutions.

Numerical simulation becomes  an important  tool  to study high order PDEs, and yet
the design of efficient and reliable numerical methods  is  very challenging.
As usual, the finite element method (FEM) plays a critical role in the numerical simulation. Both conforming and nonconforming methods have been applied to solve high order PDEs in the literature.
Usually, a conforming method  requires higher regularity of the approximating
functions (e.g. $C^1$ functions for a fourth-order PDE and $C^2$ for a sixth-order PDE), while a nonconforming method  avoids the construction of higher regularity finite elements by adding some specially designed penalty terms to the scheme.
The complicated construction of high regularity finite elements (for conforming methods) or penalty terms (for nonconforming methods)
makes these two FEMs hard to be implemented and  significantly increases computational cost. Moreover, the analysis of the aforementioned FEMs is often very complicated.

In this  work, we present a systematic and simple numerical approach to treat high-order PDEs and shed some light on
 theoretical analysis for this new method.
To be more precise, we will develop  a gradient recovery technique based $C^0$ linear finite element method for the
following sixth-order equation
\begin{eqnarray}\label{trihar}
-\triangle^3u &=&f\ \ \ \ {\rm in}\ \Omega\\
u=\partial_{\bf n} u=\partial^2_{\bf nn} u&=&0\ \ \ \ {\rm on}\
\partial\Omega,\label{hbc}
\end{eqnarray}
where $\Omega\subset \bR^2$ is an open bounded domain, $f\in L^2(\Omega)$, ${\bf n}$ is the outward unit normal of the boundary $\partial\Omega$.
The sixth order derivative  is defined as
\[
\Delta^3 u=\Delta(\Delta(\Delta u))=\sum_{i,j,k=1}^2\frac{\partial^6 u}{\partial x_i^2\partial x_j^2\partial x_k^2}
\]
and the directional derivatives are $\partial_{\bf n} u=\nabla u\cdot {\bf n}, \partial^2_{\bf nn} u={\bf n}^TD^2u\cdot {\bf n}$.
The (weak) solution of \eqref{trihar}-\eqref{hbc} is a function $u\in H^3_0(\Omega)$ satisfying
\begin{equation}\label{variational}
a(u,v)=(f,v), \forall v\in H_0^3(\Omega),
\end{equation}
where the third order derivative tensor  is given by $D^3v=\frac{\partial^3 v}{\partial x_i\partial x_j\partial x_k}$ and the bilinear form is 
\[
a(v,w)=\int_\Omega D^3v:D^3w, \quad \forall v,w\in H^3(\Omega).
\]
Here the Frobenius product ``:" for two tensors $B_1=(b_{ijk}^1),B_2=(b_{ijk}^2)$ is defined
as
\[
B_1:B_2=\sum_{i,j,k=1}^2 b^1_{ijk}b^2_{ijk}.
\]

Note that the sixth-order elliptic boundary value problem \eqref{trihar}  arises from
many mathematical models including  differential geometry(\cite{ChangChen2001, Ugail2011}),
the thin-film equations \cite{BarrettLangdon2004}, and the phase field crystal model \cite {BackofenRatzVoigt2007,ChengWarren2008,WiseWangLowengrub2009}. For simplicity,
we choose the homogeneous Dirichlet boundary conditions. The basic principle can be applied to  other
boundary conditions as well.

Let us illustrate the basic idea of the construction of our novel method. Note that the bilinear  form  $a(\cdot,\cdot)$
involves the third derivative of the discrete solution, which is impossible to obtain from a direct calculation of $C^0$ linear element whose gradient  is piecewise constant (w.r.t  the underlying mesh) and discontinuous across each element. To overcome this difficulty,  we use the gradient recovery operator $G_h$ to ``lift" discontinuous piecewise constant $D v_h$ to continuous piecewise linear function $G_hv_h$, see \cite{ZZ1992, Ainsworth-Oden,BabuskaStrouboulis,BankWeiser1985,BankXu2003a,BankXu2003b,comsol08, comsol14,Zhang} for the details of different recovery operators.
In other words, we use the special {\it difference operator} $DG_h^2$ to discretize the third order differential operator $D^3$.
Our algorithm  is then designed by applying  this special {\it difference operator} to the standard Ritz-Galerkin method.

 From the above construction, our method has some obvious advantages. First, the  fact that the recovery operator  $G_h$  can be defined on a general unstructured grid implies that the method  is valid for  problems on  arbitrary domains and meshes. Second, our method only  has function value unknowns on nodal points instead of both function value and derivative unknowns, its computational complexity is much lower than existing conforming and non-conforming methods in the literature.

Naturally, one may question on the consistency, stability, and convergence of  the proposed method,
which require some more in-depth mathematical analysis.
Let us begin with a discussion of consistency. As indicated in \cite{Zhang} (resp. \cite{guozhangzhao2014}), for reasonably regular meshes, $G_hu_I$ (resp. $G_h^2u_I$) is a second-order finite difference scheme of the gradient $Du$ (resp. $D^2u$), if $u$ is sufficiently smooth. Here
$u_I$ is the interpolation of $u$ in linear finite element space. As a consequence, $DG_h^2u_I$  is a first-order approximation of $D^3u$, provided $u$ is sufficiently smooth.
However,  for a discrete function $v_h$ in the finite element space which is not smooth across the element edges,
the error $\|Dv_h-G_h v_h\|_0$ is not a small quantity of the high order, sometimes it may not converge to zero at all. Fortunately,  an error estimate
in \cite{GuoZhangZou2015, ChenGuoZhangZou2017} set up a
consistency  in a weak sense, see \eqref{a2} and \eqref{a3} for the details.
This weak consistency property of the gradient recovery operator  will play an important role in our error analysis.

Next we discuss stability, which in our case can be reduced to
 verification of the (uniform) coercivity of the bilinear form $(DG_h^2\cdot, DG_h^2 \cdot)$ in the following sense
\begin{equation}\label{stability1}
\|v_h\|_0 \lesssim \|DG_h^2 v_h\|_0,
\end{equation}
for all $v_h$ in  the finite element space with suitable boundary conditions.
Again, since the {\it discrete Poincar\'e inequality} \eqref{a1}
has been established in \cite{GuoZhangZou2015}, the stability \eqref{stability1} is a direct consequence.
Note that  \eqref{stability1} implies that no additional penalty term is needed in order to guarantee the stability,
and this fact makes our  method very simple.

The  convergence properties of our method depends heavily on the aforementioned consistency, stability, and the nice approximation
properties of the recovery operator $G_h$.
As usual, the analysis of the error between the exact and approximate solutions can be decomposed into the analysis of the
approximation error and consistency error. Combining the weak consistency error estimates \eqref{a2},\eqref{a3} and approximation
error estimates \eqref{gr1}-\eqref{gr3} leads to the optimal convergence rate ($=1$) under the energy norm ($H^3$ norm).
This convergence rate is observed numerically. Furthermore, we also notice a second-order convergence rate under both $H^1$ and $L^2$ norms.
However, we are only able to prove a sub-optimal convergence rate $\frac32$ under both the $H^1$ and $L^2$ norms at this moment.
We would like to emphasize that our analysis here is straightforward and simpler than  the analysis of traditional conforming and nonconforming methods applied to sixth-order PDEs.

The rest of the paper is organized as follows. We first present  our algorithm in Section 2.
Several numerical examples are provided in Section 3
to illustrate the efficiency and convergence rates of our algorithm.
In Section 4, a rigorous mathematical analysis of our algorithm is given.
Finally, some concluding remarks are presented in the final section.

\section{A recovery based $C^0$ linear FEM}
In this section, we  discretize the variational equation \eqref{variational}
in the standard $C^0$ linear finite element space.

Let  $\T_h$ be a {\it triangulation} of  $\Omega$ with mesh-size $h$.
We denote by $\N_h$ and $\E_h$ the set of vertices and edges of $\T_h$, respectively.
Let $V_h$ be the standard ${\mathcal P}_1$ finite element space
corresponding to  $\T_h$. It is well-known that $V_h={\rm Span}\{\phi_p:p\in\N_h\}$ with $\phi_p$ a linear nodal
basis corresponding to each vertex $p\in\N_h$.
Let $G_h: V_h\to { V_h\times V_h}$ be a gradient recovery operator defined as below (\cite{ZZ1992,NagaZhang2005}).
For each vertex $p\in \N_h$, we define a recovered derivative $(G_hv_h)(p)$ and let the whole recovered gradient function  be
\[
G_h v_h=\sum_{p\in\N_h} (G_hv_h)(p)\phi_p.
\]

For all $v_h\in V_h$, we have $G_h v_h=(G_h^{x_1} v_h,G_h^{x_2} v_h)\in { V_h\times V_h}$. The corresponding recovered  Hessian  matrix
is defined as follows \cite{guozhangzhao2014}:
\[
G_h^2v_h=\left(\begin{array}{ll}
 G_h^{x_1}G_h^{x_1} v_h & G_h^{x_1} G_h^{x_2} v_h\\
G_h^{x_2} G_h^{x_1} v_h & G_h^{x_2} G_h^{x_2} v_h
\end{array}\right).
\]
The  derivative of $G_h^2 v_h$ is a  tensor with its component
\[
(DG_h^2v_h)_{ijk}=
\partial_{x_i} G_h^{x_j}G_h^{x_k} v_h , i,j,k=1,2.
\]
For all $v_h,w_h\in V_h$, we define a {\it discrete} bilinear form
\[
a_h(v_h,w_h)=\int_{\Omega} D(G_h^2v_h):D(G_h^2 w_h),
\]

The gradient recovery linear element scheme for solving \eqref{trihar} reads as :
Find
$u_h\in V_h^{0}$ such that
\begin{eqnarray}\label{grs4ebc}
a_h(u_h,v_h)&=&(f,v_h),\quad v_h\in
V_h^{0},
\end{eqnarray}
where the homogenous finite element space
\[
V_h^{0}=\{v_h\in V_h| v_h=G_hv_h\cdot{\bf n}=G_hv_h\cdot{\bf t}={\bf n}^T G_h^2 v_h {\bf n}=0 \ {\rm on}\ \partial\Omega\}.
\]
Note that here we use an additional condition $G_hv_h\cdot{\bf t}=0$
since the exact solution satisfies $\frac{\partial u}{\partial {\bf t}}=0$ on $\partial\Omega$,
where ${\bf t}$ is the unit tangential vector on $\partial \Omega$.

\begin{remark}
 For sixth order partial differential equation \eqref{trihar}-\eqref{hbc}, all three boundary conditions are essential boundary conditions and we should
 incorporate such types boundary conditions into the discretized linear system instead of the weak form.  For  partial differential equation \eqref{trihar}
 with nonhomogeneous boundary conditions
 \begin{equation}
 u|_{\partial \Omega}=g_D,\quad  \partial_{\bf n} u|_{\partial \Omega}=g_N, \quad  \partial^2_{\bf nn} u|_{\partial \Omega}= g_R.
\label{equ:nonhomo}
\end{equation}
The variation form is  to find $u_h \in V_h  $ with $u_h|_{\partial \Omega}=g_D,\partial_{\bf n} u_h|_{\partial \Omega}=g_N, \partial^2_{\bf nn} u_h|_{\partial \Omega}= g_R$
such that
\begin{eqnarray*}
a_h(u_h,v_h)&=&(f,v_h),\quad v_h\in
V_h^{0},
\end{eqnarray*}
For numerical implement, there is no difference between homogeneous and nonhomogeneous boundary conditions.  In the article, we suppose homogeneous boundary
conditions only for simplifying numerical analysis.
\end{remark}

\begin{remark}The scheme \eqref{grs4ebc} depends on the definition of $(G_hv_h)(p)$ at each vertex $p\in\N_h$.  In the following, three popular
definitions of $(G_hv_h)(p)$  are listed (c.f., \cite{ZZ1992,NagaZhang2005}).

(a) Weighted averaging(WA). For each $p\in \N_h$, let the element patch $\omega_p=\cup\{\tau:p\in\bar{\tau}\}$
and define
\begin{equation}\label{weight0}
(G_hv_h)(p)=\frac{1}{|\omega_p|}\int_{\omega_p} \nabla v_h (x_1,x_2) dx_1dx_2.
\end{equation}
\noindent
(b) Local $L^2$-projection. We seek two polynomials $P_l \in {\mathcal P}^1(\omega_p), (l=1,2)$, such that
\begin{equation}\label{lol2proj}
  \int_{\omega_p}[P_l(x_1,x_2)- \partial_{x_l}v(x_1,x_2)]Q(x_1,x_2)dx_1dx_2=0,\quad \forall Q\in \mathcal P^1(\omega_p), l=1,2
\end{equation}
  and  we  define
  \[
  (G_hv_h)(p) = (P_1(p),P_2(p)).
  \]
Sometimes, the exact integral in \eqref{lol2proj} is replaced by its discrete counterpart so that the
two polynomials $P_l,l=1,2$ satisfying  the least square fitting equation (SPR)
\begin{equation}\label{lsf}
 \sum_{i=1}^m[P_l(x^i_1,x^i_2)- \partial_{x^i_l}v(x^i_1,x^i_2)]Q(x^i_1,x^i_2)=0,\quad \forall Q\in \mathcal P^1(\omega_P), l=1,2,
\end{equation}
where $(x_1^i,x_2^i), i=1,\ldots,m$ are $m$ given points in $\omega_p$.

\noindent
(c) The polynomial preserving recovery (PPR). We seek a quadratic function $P\in \mathcal P^2(\omega_p)$, such that
 \begin{equation}\label{ppr}
  \sum_{i=1}^m[P(x_1^i,x_2^i)- v(x_1^i,x_2^i)]Q(x_1^i,x_2^i)=0,\quad \forall Q\in \mathcal P^2(\omega_P).
 \end{equation}
  Then we can define $(G_hv_h)(p) = (\partial_{x_1}P (p),\partial_{x_2} P(p))$.

It is known that the above three definitions are equivalent on a  mesh of uniform triangular pattern \cite{Zhang}.

\end{remark}

\begin{remark}Essentially, the operator $G_h$ can be regarded as a difference operator defined on unstructured grids. This operator
 lifts discontinuous gradient generated from a $C^0$-FEM to a continuous one, and thereby makes the further calculation of high order derivatives possible.
\end{remark}
\begin{remark}
The scheme \eqref{grs4ebc} is very simple and straightforward. It avoids the complicated construction of conforming $C^2$ finite element basis (c.f., \cite{HuZhang2015}) or the complicated construction of nonconforming penalty terms (\cite{GudiNeilan2011}).
\end{remark}

For $\mathcal{A}\subset \Omega$, let $V_h(\mathcal{A})$ denote the restrictions
of functions in $V_h$ to $\mathcal{A}$ and let $V_h^{\text{comp}}(\mathcal{A})$ denote the set of those functions
in $V_h(\mathcal{A})$ with compact support in the interior of $\mathcal{A}$
\cite{wahlbin1995}. Let
$ \Omega_0\subset\subset \Omega_1 \subset\subset \Omega_2 \subset\subset\Omega$
be  separated by $d\ge c_oh$ and  $\ell$ be a direction, i.e., a unit vector
in $\mathbb{R}^2$.
Let $\tau$ be a parameter, which will typically be a multiply of $h$.
Let $T^{\ell}_{\tau}$ denote translation by $\tau$ in the direction $\ell$,
i.e.,
\begin{equation}
  T^{\ell}_{\tau}v(x) = v(x+\tau\ell),
  \label{equ:translation}
\end{equation}
and for an integer $\nu$
\begin{equation}
  T^{\ell}_{\nu\tau}v(x) = v(x+\nu\tau\ell).
  \label{trans}
\end{equation}
Following the definition of \cite{wahlbin1995}, the finite element space
$V_h$ is called translation invariant by $\tau$ in the direction $\ell$ if
\begin{equation}
  T^{\ell}_{\nu \tau}v\in V^{\text{comp}}_h(\Omega), \quad
  \forall v \in V^{\text{comp}}_h(\Omega_1),
\end{equation}
for some integer $\nu$ with $|\nu| < M$.  Equivalently,
$\mathcal{T}_h$ is called a translation invariant mesh.
As illustrated in \cite{guozhangzhao2014},  uniform meshes of regular pattern, chevron pattern, cirsscross patter, and unionjack pattern are all translation invariant.

\section{Analysis}

The  section is dedicated to a mathematical proof for the convergence properties.

To this end, we need some properties of $G_h$.    For the polynomial preserving recovery operator  $G_h$, there are the following boundedness
property  (see (2.11) in \cite{NagaZhang2005})
\begin{equation}\label{gr0}
\|G_h v_h\|_0\lesssim |v_h|_1 ,\quad v_h\in V_h
\end{equation}
and
the superconvergence approximation properties
\begin{equation}\label{gr1}
\| \nabla u-G_h u_I\|_0\lesssim h^2|u|_{3,\infty}{,} \quad u\in W^{3,\infty}(\Omega).
\end{equation}
Here $u_I$ is the linear interpolation of $u$ in $V_h$.  In addition,   we will utilize the following ultraconvergence approximation properties of Hessian recovery
operator (see Theorem 3.5 in  \cite{guozhangzhao2014})
\begin{eqnarray}
\| D^3 u-DG^2_h u_I\|_0&\lesssim& h|u|_{4,\infty}{,} \quad u\in W^{4,\infty}(\Omega),\label{gr2}\\
\| D^2 u-G^2_h u_I\|_0&\lesssim& h^2|u|_{4,\infty}{,} \quad u\in W^{4,\infty}(\Omega),\label{gr3}
\end{eqnarray}
provided the mesh $\mathcal{T}_h$ is translation invariant.
\begin{remark}
 We would like to comment that the requirement $\mathcal{T}_h$ translation invariant for proving approximation properties \eqref{gr2} and \eqref{gr3} only for
 theoretical purpose.  In practical, our method can be applied to and shows optimal convergence on  arbitrary unstructured mesh.
\end{remark}


To analyze the convergence of the scheme \eqref{grs4ebc}, we suppose that the $\T_h$ is sufficient regular such that  there holds the following discrete Poincar\'{e} inequality (cf.,\cite{GuoZhangZou2015})
\begin{equation}\label{a1}
\ \ \  \|v_h\|_i\lesssim \|G_h v_h\|_i, \forall v_h\in V_h^0, i=0,1
\end{equation}
 and discrete {\it weak} approximation properties
\begin{eqnarray}\label{a2}
\left|\int_{\Omega} \nabla v\cdot (G_h v_h-\nabla v_h)\right|&\lesssim& h\|v\|_2 |G_hv_h|_1,\forall v\in H^2,\\
\label{a3}
\left|\int_{\Omega} \nabla v\cdot (G_h v_h-\nabla v_h)\right|&\lesssim& h^2\|v\|_3 |G_hv_h|_1,\forall v\in H^3.
\end{eqnarray}
Note that both \eqref{a1} and \eqref{a2} have been discussed in the analysis of a recovery operator based
linear finite element method for the biharmonic equation by Guo et al. in \cite{GuoZhangZou2015}.
In their paper, a counter-example shows that the strong error
 $\|\nabla v_h-G_hv_h\|_0$ is not necessary of ${\cal O}(h)$ for all $v_h\in V_h$ (which means the weak estimate \eqref{a2} might be
 the best estimate of the difference $\nabla v_h-G_hv_h$).

By \eqref{a1}, for all $v_h\in V_h^0$,
we have
\[
\|v_h\|_0\lesssim \|G_hv_h\|_0\lesssim \|G_h^2 v_h\|_0\lesssim \|DG_h^2 v_h\|_0.
\]
In other words, the semi-norm $\|DG_h^2\cdot\|_0$ is a norm.
 Then by the Lax-Milgram theorem,  the scheme \eqref{grs4ebc}  has a unique solution .
Moreover, by \eqref{grs4ebc},
\[
\|DG_h^2u_h\|_0^2=a_h(u_h,u_h)=(f,u_h)\lesssim \|f\|_0\|v_h\|_0.
\]
Then
\begin{equation}\label{stability}
\|DG_h^2 u_h\|_0\lesssim \|f\|_0
\end{equation}
which implies the stability of our scheme.

\subsection{$H^3$ error estimate}
\begin{theorem}
Let $u_h$ be the solution of \eqref{grs4ebc} and $u\in H^6$  the solution of \eqref{variational}.
 If the mesh $\T_h$ is  translation invariant, $G_h$ is properly defined  such that \eqref{gr0}-\eqref{a3} hold, then
\begin{equation}\label{h3}
\|D G_h^2(u_h-u_I)\|_0\lesssim h\|u\|_6,
\end{equation}
where $u_I$ is the linear interpolation of $u$ in $V_h$.
Consequently,
\begin{equation}\label{h30}
\|D^3 u-D G_h^2 u_h\|_0\lesssim h\|u\|_6.
\end{equation}
\label{thm:h3error}
\end{theorem}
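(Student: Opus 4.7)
The strategy is an energy argument together with a Strang-type decomposition of the consistency error. Set $e_h := u_h - u_I$; a mild adjustment of $u_I$ at boundary nodes makes $e_h \in V_h^{0}$, and this adjustment contributes negligibly. Since $a_h(\cdot,\cdot)$ is the $L^2$ inner product of $DG_h^2\cdot$, discrete coercivity $\|DG_h^2 e_h\|_0^2 = a_h(e_h,e_h)$ is immediate, and because $u_h$ solves \eqref{grs4ebc},
\begin{equation*}
\|DG_h^2 e_h\|_0^2 = (f,e_h) - a_h(u_I,e_h).
\end{equation*}
It therefore suffices to bound the right-hand side by $h\|u\|_6\,\|DG_h^2 e_h\|_0$.

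The natural decomposition is
\begin{equation*}
(f,e_h) - a_h(u_I,e_h) = \int (D^3 u - DG_h^2 u_I):DG_h^2 e_h + \Big[(f,e_h) - \int D^3 u:DG_h^2 e_h\Big].
\end{equation*}
The first summand (the approximation error) is handled by Cauchy--Schwarz and the ultra-convergence estimate \eqref{gr2}: $\|D^3 u - DG_h^2 u_I\|_0 \lesssim h\|u\|_{4,\infty} \lesssim h\|u\|_6$, the second inequality being the two-dimensional Sobolev embedding $H^6(\Omega)\hookrightarrow W^{4,\infty}(\Omega)$.

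The bracketed summand (the variational crime) is the main work. Since $f = -\Delta^3 u$ and $e_h|_{\partial\Omega}=0$, an integration by parts gives $(f,e_h) = \int \nabla(\Delta^2 u)\cdot\nabla e_h$. Applying \eqref{a2} with $v = \Delta^2 u \in H^2$ swaps $\nabla e_h$ for $G_h e_h$ at a cost of $h\|u\|_6\,|G_h e_h|_1 \lesssim h\|u\|_6\,\|DG_h^2 e_h\|_0$, where the last inequality is a standard discrete Poincar\'e-type consequence of \eqref{a1}. A second integration by parts (whose boundary term vanishes thanks to $G_h e_h\cdot{\bf n}=0$), followed by another use of \eqref{a2} or \eqref{a3} on the resulting matrix-valued integral, and a final integration by parts killed by ${\bf n}^T G_h^2 e_h\,{\bf n}=0$, matches the expression to $\int D^3 u:DG_h^2 e_h$ up to a further remainder of the same order. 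Combining both summands yields $\|DG_h^2 e_h\|_0^2 \lesssim h\|u\|_6\,\|DG_h^2 e_h\|_0$, hence \eqref{h3}; and \eqref{h30} follows by the triangle inequality together with \eqref{gr2}.

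The main obstacle is the consistency step: the successive integrations by parts must be arranged so that each intermediate boundary integral is annihilated by exactly one of the conditions defining $V_h^{0}$. On a polygonal $\Omega$ this works cleanly because $u = \partial_{\bf n} u = \partial^2_{\bf nn} u = 0$ on each flat edge forces $D^2 u = 0$ there, so that every component of $\partial_{\bf n} D^2 u$ on $\partial\Omega$ vanishes except the ${\bf n}{\bf n}$-one, and that one pairs against the vanishing ${\bf n}{\bf n}$-entry of $G_h^2 e_h$.
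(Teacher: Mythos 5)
Your proposal is correct and follows essentially the same route as the paper: the two summands of your decomposition are exactly the paper's approximation term $I_3$ (handled by \eqref{gr2} and the embedding $H^6\hookrightarrow W^{4,\infty}$) and its consistency terms $I_1+I_2$ (successive integrations by parts whose boundary integrals are killed by the conditions defining $V_h^0$, followed by \eqref{a2}, \eqref{a3} and \eqref{a1}), with the test function taken to be $e_h=u_h-u_I$ at the end. The only cosmetic slip is that the boundary term after the second integration by parts needs both $G_he_h\cdot{\bf n}=0$ and $G_he_h\cdot{\bf t}=0$, not just the normal condition, but you invoke the tangential condition correctly elsewhere.
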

\begin{proof}Since a weak solution of \eqref{variational} which has regularity $u\in H^6$
is also the strong solution  satisfying \eqref{trihar} and $u_h$ is a discrete solution
satisfying \eqref{grs4ebc}, we have
\begin{eqnarray*}
a_h(u_h, v_h)=(f,v_h)=(-\Delta^3u, v_h), \forall v_h\in V_h^0.
\end{eqnarray*}
Using the fact that $v_h\in V_h^0$,
we have
\[
(-\Delta^3u, v_h)=(-(\nabla\cdot\nabla)(\Delta^2u),  v_h)=(\nabla(\Delta^2u), \nabla v_h).
\]
Therefore,
\[
a_h(u_h, v_h)=I_1+(\nabla(\Delta^2u), G_h v_h),
\]
with
\begin{equation}\label{i1}
I_1=(\nabla(\Delta^2u), \nabla v_h-G_hv_h).
\end{equation}

Now we deal with the term  $(\nabla(\Delta^2u), G_h v_h)$.
Since $G_hv_h\cdot{\bf n}=G_hv_h\cdot{\bf t}=0$  on the boundary $\partial\Omega$, we have that on $\partial\Omega$,
\begin{eqnarray*}
\frac{\partial \Delta u}{\partial {\bf n}}\cdot G_h v_h=\frac{\partial^2 \Delta u}{\partial {\bf n}\partial {\bf t}}(G_h v_h\cdot {\bf t})+\frac{\partial^2 \Delta u}{\partial {\bf n}^2}(G_h v_h\cdot {\bf n})=0.
\end{eqnarray*}
Then
\begin{eqnarray*}
(\nabla(\Delta^2u), G_h v_h)&=&((\nabla\cdot\nabla)(\Delta (\nabla u)), G_h v_h)\\
&=&
- (D^2(\Delta u), D G_hv_h):=-\int_\Omega D^2(\Delta u) :DG_hv_h.
\end{eqnarray*}


Consequently,
\[
(\nabla(\Delta^2u), G_h v_h)=I_2- (D^2(\Delta u),  G_h^2 v_h),
\]
with
\begin{equation}\label{i2}
I_2=(D^2(\Delta u), G_h^2 v_h-DG_hv_h).
\end{equation}

Finally, we deal with the term $- (D^2(\Delta u),  G_h^2 v_h)$.
Writing the gradient as
\[
\nabla u=\frac{\partial u}{\partial {\bf n}}{\bf n}+\frac{\partial u}{\partial {\bf t}}{\bf t},
\]
we have
\[
D^2u=\frac{\partial^2 u}{\partial {\bf n}^2}{\bf n}{\bf n}^T +\frac{\partial^2 u}{\partial {\bf t}^2} {\bf t}{\bf t}^T+
\frac{\partial^2 u}{\partial {\bf n}\partial {\bf t}}({\bf n}{\bf t}^T+{\bf t}{\bf n}^T),
\]
and consequently,
\[
\frac{\partial D^2u}{\partial {\bf n}}=\frac{\partial^3 u}{\partial {\bf n}^3}{\bf n}{\bf n}^T +\frac{\partial^3 u}{\partial {\bf n}\partial {\bf t}^2} {\bf t}{\bf t}^T+
\frac{\partial^3 u}{\partial {\bf n}^2\partial {\bf t}}({\bf n}{\bf t}^T+{\bf t}{\bf n}^T).
\]
Noting that
$u=\frac{\partial u}{\partial {\bf n}}=\frac{\partial^2 u}{\partial {\bf n^2}}=0$ on $\partial\Omega$, we have
\[
\frac{\partial^3 u}{\partial {\bf n}\partial {\bf t}^2}=\frac{\partial^3 u}{\partial {\bf n}^2\partial {\bf t}}=0\ {\rm on}\ \Omega.
\]
Therefore, on $\partial\Omega$,
\[
\frac{\partial D^2u}{\partial {\bf n}} : G_h^2v_h=\frac{\partial^3 u}{\partial {\bf n}^3}{\bf n}{\bf n}^T:G_h^2v_h=\frac{\partial^3 u}{\partial {\bf n}^3}{\bf n}^TG_h^2v_h{\bf n}=0,
\]
where in the last equality, we used the fact that ${\bf n}^T G_h^2 v_h {\bf n}=0$.
By Green's formulas, we finally obtain
\[
- (D^2(\Delta u),  G_h^2 v_h)=- (\Delta (D^2 u),  G_h^2 v_h) =(D^3u, DG_h^2 v_h).
\]

In summary, by letting
\[
I_3=(D^3u-DG_h^2u_I, DG_h^2 v_h),
\]
we obtain
\begin{equation}
 a_h(u_h - u_I, v_h) =  I_1 + I_2 + I_3. \label{interpolationerror}
\end{equation}
Next, we estimate $I_i$ (i = 1, 2, 3) term by term.   By \eqref{a1} and \eqref{a2},  we have
\[
|I_1| \lesssim h \|u\|_6|G_hv_h|_1 \lesssim  h||u||_6||DG_h^2v_h||_0.
\]
Similarly, by \eqref{gr2},
\[
|I_3|  \lesssim h|u|_{4,\infty}||DG_h^2v_h||_0\lesssim h\|u\|_6||DG_h^2v_h||_0.
\]
On the other hand, \eqref{a3} implies
\[
|I_2| \lesssim   h\|u\|_6||DG_h^2v_h||_0.
\]

In a conclusion, we obtain that
\[
|a_h(u_h-u_I,v_h)|\lesssim h\|u\|_6\|DG_h^2v_h\|_0,\forall v_h\in V_h^0.
\]
Choosing $v_h=u_h-u_I$ in the above estimate, we have \eqref{h3}.

The estimate  \eqref{h30} is a direct consequence of \eqref{h3} and \eqref{gr2}, together with the triangle inequality.
\end{proof}

\subsection{$H^1$ error estimate}
In this section, we use the Aubin-Nitsche technique to estimate the $H^1$ norm error $\|\nabla u-G_hu_h\|_0$.
To this end, we construct the following auxiliary
problems :
\begin{itemize}
\item[1)] Find $U\in H_0^3(\Omega)$ such that
\begin{equation}\label{aux}
\int_{\Omega} D^3U:D^3 v=(G_h(u_h-u_I), \nabla v), \forall v\in H_0^3(\Omega).
\end{equation}
\item[2)]
Find $U_h\in V_h^{0}$ such that
\begin{equation}\label{auxdis}
a_h(U_h,v_h)=(G_h(u_h-u_I),\nabla v_h), \forall v_h\in V_h^{0}.
\end{equation}
\end{itemize}
 It is easy to deduce from
\eqref{aux}  that
\begin{equation}\label{auxbound}
\|U\|_5\lesssim \|G_h(u_h-u_I)\|_0,
\end{equation}
and from \eqref{auxdis},\eqref{a1} that
\begin{equation}\label{auxbounderr}
\|D(G_h^2 U_h)\|_0\lesssim \|G_h(u_h-u_I)\|_0.
\end{equation}

\begin{theorem}
\label{thm:h1error}
Let $u_h$ be the solution of \eqref{grs4ebc} and $u\in H^6$  the solution of \eqref{variational}.
 If the mesh $\T_h$ is  translation invariant, and $G_h$ is properly defined  such that \eqref{gr0}-\eqref{a3} hold, then
\begin{equation}\label{dh1}
\|G_h(u_h-u_I)\|_0\lesssim h^{\frac32}\|u\|_6.
\end{equation}
Consequently,
\begin{equation}\label{oh1}
\|\nabla u- G_hu_h\|_0\lesssim h^{\frac32}\|u\|_6.
\end{equation}
\end{theorem}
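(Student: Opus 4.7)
The plan is to carry out an Aubin--Nitsche duality argument based on the auxiliary problems \eqref{aux} and \eqref{auxdis}. The starting point is the algebraic decomposition
\[
\|G_h(u_h-u_I)\|_0^2=\bigl(G_h(u_h-u_I),\nabla(u_h-u_I)\bigr)+\bigl(G_h(u_h-u_I),G_h(u_h-u_I)-\nabla(u_h-u_I)\bigr),
\]
whose first summand is amenable to the discrete bilinear form $a_h$ through \eqref{auxdis}, and whose second (residual) summand captures the non-conformity of the recovered gradient against the true gradient.

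For the first scalar product I would test \eqref{auxdis} with $v_h=u_h-u_I$; symmetry of $a_h$ yields $\bigl(G_h(u_h-u_I),\nabla(u_h-u_I)\bigr)=a_h(u_h-u_I,U_h)$, and the integration-by-parts identity already derived in the proof of Theorem \ref{thm:h3error} then applies verbatim to give $a_h(u_h-u_I,U_h)=I_1(U_h)+I_2(U_h)+I_3(U_h)$. The task is to improve on the naive bound $\lesssim h\|u\|_6\,\|G_h(u_h-u_I)\|_0$ (obtained by crudely feeding the stability \eqref{auxbounderr} into the $H^3$-analysis) by an extra $h^{1/2}$ factor. To this end I would split $U_h=U_I+(U_h-U_I)$: on the smooth piece $U_I$ the weak consistency estimates \eqref{a2}--\eqref{a3} are paired against the dual regularity $\|U\|_5\lesssim\|G_h(u_h-u_I)\|_0$ from \eqref{auxbound}, while the discrete fluctuation $U_h-U_I$ is controlled through the $DG_h^2$-stability \eqref{auxbounderr} combined with the superconvergence \eqref{gr2}. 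The residual term $\bigl(G_h(u_h-u_I),G_h(u_h-u_I)-\nabla(u_h-u_I)\bigr)$ is handled in the same spirit, invoking the weak consistency \eqref{a2} together with the $H^1$-bound \eqref{gr0} on $G_h(u_h-u_I)$. Collecting the pieces produces \eqref{dh1}, after which \eqref{oh1} follows from the triangle inequality $\|\nabla u-G_hu_h\|_0\le\|\nabla u-G_hu_I\|_0+\|G_h(u_h-u_I)\|_0$ combined with the gradient superconvergence \eqref{gr1}.

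The principal obstacle, and the very reason the theorem only claims the sub-optimal exponent $3/2$ rather than the numerically observed $2$, is the limited regularity of the dual solution: because the data in \eqref{aux} sits in the gradient-slot, \eqref{auxbound} only guarantees $U\in H^5$, one derivative short of what would be needed to plug $\Delta^2 U$ directly into the sharp weak estimate \eqref{a3}. Only a half-derivative of additional smoothness can therefore be harvested in the duality step, manifesting as the $h^{1/2}$ improvement over the primal $H^3$-estimate. Rigorously bookkeeping this half-derivative, and simultaneously controlling the fluctuation $U_h-U_I$ in the $DG_h^2$-norm when $U$ just misses the $H^6$-hypothesis of Theorem \ref{thm:h3error}, is where the main technical work of the proof concentrates.
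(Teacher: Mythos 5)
Your overall strategy --- Aubin--Nitsche duality through the auxiliary problems \eqref{aux}--\eqref{auxdis}, testing \eqref{auxdis} with $v_h=u_h-u_I$ and reusing the integration-by-parts splitting from Theorem \ref{thm:h3error} --- is exactly the paper's strategy. However, the specific mechanisms you propose for extracting the rate do not go through. First, to control the residual term $(G_h(u_h-u_I),\,G_h(u_h-u_I)-\nabla(u_h-u_I))$ you invoke \eqref{a2}, but \eqref{a2} requires the first factor to be $\nabla v$ for some $v\in H^2$; here the first factor is the discrete field $G_h(u_h-u_I)$, which is not of that form, so the estimate is not applicable. (The paper instead uses the norm equivalence $\|G_hw_h\|_0^2\sim(G_hw_h,\nabla w_h)$ on $V_h^0$.) Second, the splitting $U_h=U_I+(U_h-U_I)$ is both unnecessary and unproductive: \eqref{a2}--\eqref{a3} hold for arbitrary discrete test functions, so they are applied directly with $v_h=U_h$; the improvement from $h$ to $h^2$ in the analogues of $I_1$ and $I_2$ comes from using the second-order weak consistency \eqref{a3} in place of \eqref{a2}, paired with $|G_hU_h|_1\lesssim\|DG_h^2U_h\|_0\lesssim\|G_h(u_h-u_I)\|_0$ from \eqref{auxbounderr}. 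On the fluctuation $U_h-U_I$, the combination of \eqref{auxbounderr} with \eqref{gr2} only yields $\|DG_h^2(U_h-U_I)\|_0=O(1)\cdot\|G_h(u_h-u_I)\|_0$, with no extra power of $h$, so no half-derivative is ``harvested'' there.

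Third, and most importantly, you never isolate the term that actually caps the rate at $3/2$. In the paper the consistency term $(D^3u-DG_h^2u_I,\,DG_h^2U_h)$ is split as $J_3+J_4$ with $J_3=(D^3u-DG_h^2u_I,D^3U)$, integrated by parts and bounded by $\|D^2u-G_h^2u_I\|_0\|D^4U\|_0\lesssim h^2\|u\|_6\|G_h(u_h-u_I)\|_0$ via \eqref{gr3} and \eqref{auxbound}, and $J_4=(D^3u-DG_h^2u_I,\,DG_h^2U_h-D^3U)$, bounded by Cauchy--Schwarz as $h\|u\|_6\cdot h\|U\|_6\lesssim h^2\|u\|_6\|G_h(u_h-u_I)\|_1\lesssim h^3\|u\|_6^2$, the last step using the already-proved bound \eqref{h3} together with \eqref{a1}. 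Every other term is bounded by $h^2\|u\|_6\|G_h(u_h-u_I)\|_0$, and the quadratic inequality
\[
\|G_h(u_h-u_I)\|_0^2\lesssim h^2\|u\|_6\|G_h(u_h-u_I)\|_0+h^3\|u\|_6^2
\]
is what forces the exponent $3/2$ (the square root of the $h^3$ term). Your narrative --- that the bottleneck is $U$ lying only in $H^5$ so that ``half a derivative'' is gained in the duality step --- is not a computation and does not match this mechanism; no chain of inequalities in your outline produces a factor $h^{1/2}$. To complete the proof you need the $J_3/J_4$ split and the absorption argument above; the concluding step for \eqref{oh1} via the triangle inequality and \eqref{gr1} is fine.
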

\begin{proof}First, by the definition of the auxiliary problems, we have
\begin{eqnarray*}
(G_h(u_h-u_I),\nabla (u_h-u_I))&=&a_h(U_h,u_h-u_I)\\&=&a_h(u_h-u_I,U_h)\\
&=&(f,U_h)-a_h(u_I,U_h)\\
&=&(-\triangle^3 u,U_h)-a_h(u_I,U_h).
\end{eqnarray*}
Using the same splitting techniques in the previous theorem, we can write
\[
(G_h(u_h-u_I),\nabla (u_h-u_I))=J_1+J_2+J_3+J_4,
\]
where
\begin{eqnarray*}
J_1&=& (\nabla(\triangle^2 u),\nabla U_h-G_h U_h),
\\
J_2&=& (D^2 (\triangle u),G_h^2 U_h-DG_h U_h),\\
J_3&=&(D^3 u-DG_h^2u_I, D^3U),\\
J_4&=&(D^3 u-DG_h^2u_I, DG_h^2U_h-D^3U).
\end{eqnarray*}
We first estimate $J_1$ and $J_3$. By \eqref{a3},
\begin{eqnarray*}
|J_1|&\lesssim&  h^2\|u\|_6\|DG_hU_h\|_0\\
&\lesssim& h^2\|u\|_6\|DG_h^2U_h\|_0\\
&\lesssim& h^2\|u\|_6\|G_h(u_h-u_I)\|_0.
\end{eqnarray*}
and
\begin{eqnarray*}
|J_2|
&\lesssim& h^2\|u\|_6\|DG_h^2U_h\|_0\\
&\lesssim& h^2\|u\|_6\|G_h(u_h-u_I)\|_0.
\end{eqnarray*}
Moreover, using the integration by parts,
\begin{eqnarray*}
|J_3|&\le &\|D^2u-G_h^2 u_I\|_0\|D^4U\|_0\\
&\lesssim& h^2\|u\|_6\|G_h(u_h-u_I)\|_0.
\end{eqnarray*}
Finally, by Cauchy-Schwartz inequality,
\begin{eqnarray*}
|J_4|&\lesssim& h^2\|u\|_6\|U\|_6\\
&\lesssim& h^2\|u\|_6\|G_h(u_h-u_I)\|_1\lesssim h^3\|u\|_6^2.
\end{eqnarray*}
Summarizing all the above estimates, we obtain
\[
(G_h(u_h-u_I),\nabla (u_h-u_I))\lesssim h^2\|u\|_6\|G_h(u_h-u_I)\|_0+h^3\|u\|_6^2.
\]
Noticing that
\[
\|G_h(u_h-u_I)\|_0^2\sim (G_h(u_h-u_I),\nabla (u_h-u_I)),
\]
we arrive that
\[
\|G_h(u_h-u_I)\|_0^2\lesssim h^2\|u\|_6 \|G_h(u_h-u_I)\|_0+h^3\|u\|_6^2.
\]
Then the estimate \eqref{dh1} follows.

The $H^1$ error estimate of \eqref{oh1} is a direct consequence of \eqref{dh1} and \eqref{gr1}.
\end{proof}
\subsection{$L^2$ error estimate}

\begin{theorem}
\label{thm:l2error}
Let $u_h$ be the solution of \eqref{grs4ebc} and $u\in H^6$  the solution of \eqref{variational}.
 If the mesh $\T_h$ is sufficiently regular and uniform, $G_h$ is properly defined  such that \eqref{gr0}-\eqref{a3} hold, then
\begin{gather}
\|u-u_h\|_0\lesssim h^{\frac32}\|u\|_6.\label{l2}\end{gather}
\end{theorem}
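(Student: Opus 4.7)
The plan is to deduce the $L^2$ estimate directly from the $H^1$ estimate of Theorem \ref{thm:h1error} via a triangle inequality argument, without needing a separate duality problem. Specifically, I would split
\[
\|u-u_h\|_0 \le \|u-u_I\|_0 + \|u_I - u_h\|_0
\]
and handle the two pieces independently. The first piece is controlled by the standard nodal interpolation estimate $\|u-u_I\|_0 \lesssim h^2 |u|_2 \lesssim h^2 \|u\|_6$, which is already stronger than the target rate.

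The second piece is the essential one. Since $u_h - u_I \in V_h^0$ (this membership is used implicitly already in Theorems \ref{thm:h3error} and \ref{thm:h1error}, when choosing test functions), the discrete Poincar\'e inequality \eqref{a1} with $i=0$ gives
\[
\|u_I-u_h\|_0 \lesssim \|G_h(u_I-u_h)\|_0,
\]
and the quantity on the right is exactly what was bounded in the preceding theorem: by \eqref{dh1},
\[
\|G_h(u_h-u_I)\|_0 \lesssim h^{3/2}\|u\|_6.
\]
Combining the two pieces and absorbing the $h^2$ term into the $h^{3/2}$ term yields \eqref{l2}.

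The only substantive point worth verifying carefully is that $u_h - u_I$ belongs to $V_h^0$, i.e., that $u_I$ satisfies the three boundary relations encoded in the definition of $V_h^0$ (so that the difference does). This is the same boundary-compatibility issue that underlies the choice of test function $v_h = u_h - u_I$ in the $H^3$ proof, and I expect it to follow from the assumed boundary regularity of $u$ together with the construction of $G_h$ near $\partial\Omega$, rather than requiring new machinery. Other than this bookkeeping, the proof is essentially a one-line consequence of Theorem \ref{thm:h1error} and the discrete Poincar\'e inequality, which explains why the $L^2$ rate does not improve on the $H^1$ rate: both are limited by the weak-consistency bottleneck at $h^{3/2}$ inherited from \eqref{a3}.
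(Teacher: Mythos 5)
Your proposal is correct and is essentially identical to the paper's own proof: the paper also applies the triangle inequality $\|u-u_h\|_0\lesssim\|u-u_I\|_0+\|u_I-u_h\|_0$ and then bounds $\|u_I-u_h\|_0$ by $\|G_h(u_I-u_h)\|_0\lesssim h^{3/2}\|u\|_6$ using the discrete Poincar\'e inequality \eqref{a1} and the bound \eqref{dh1} from Theorem \ref{thm:h1error}. The boundary-membership issue $u_h-u_I\in V_h^0$ that you flag is indeed left implicit in the paper as well.
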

\begin{proof} By \eqref{a1},
\begin{eqnarray*}
\|u_I-u_h\|_0&\lesssim &\|G_h(u_I-u_h)\|_0\lesssim h^{\frac32}\|u\|_6.
\end{eqnarray*}
Then
\begin{eqnarray*}
\|u-u_h\|_0&\lesssim&\|u-u_I\|_0+\|u_I-u_h\|_0 \lesssim h^{\frac32}\|u\|_6.
\end{eqnarray*}
\end{proof}

\begin{remark} In the second section, we observed the convergence rates ${\cal O}(h^2)$ both for the errors
$\|u-u_h\|_{0}$ and $\|\nabla u-G_hu_h\|_{0}$. However, we can only prove the order ${\cal O}(h^{3\over 2})$ from our analysis.
Further analysis to the scheme is desired to prove the optimal convergence rates of $\|u-u_h\|_{0}$ and $\|\nabla u-G_hu_h\|_{0}$.
\end{remark}

\section{Numerical Experiments}
In this section, we present several numerical experiments to show the convergence rates and
efficiency of our method. In all our numerical experiments,  $G_h$ is chosen as the polynomial preserving recovery operator \cite{ZhangNaga2005}.
To present our numerical results, the following notations are used :
\begin{eqnarray*}
De&:=&\|u-u_h\|_{0},\quad\quad\quad D^1e:=\|\nabla u- \nabla u_h\|_{0},\\
D^1_re&:=&\|\nabla u-G_hu_h\|_{0},\quad
D^2e:=\|D^2 u-DG_hu_h\|_{ 0}, \\
D^3e&:=&\|D^3u - DG_h^2u_h\|_{ 0}.
 \end{eqnarray*}
Moreover, the convergence rates are listed  with respect to the degree of freedom(Dof).
Noticing  $\text{Dof}  \approx h^{-2}$ for a two dimensional grid,  the
corresponding  convergent rates
with respect to the mesh size $h$ are double of what we present in the tables 3.1-3.11.

{\bf Example 1}.  We consider the triharmonic problem
\begin{equation}
\left\{
\begin{array}{ll}
  -\Delta^3u = f&  \text{in } \Omega = (0, 1)\times(0, 1);    \\
  u=\partial_{\bf n} u=\partial^2_{\bf nn} u=0 & \text{on } \partial \Omega,
\end{array}
\right.
\end{equation}
where $f$ is chosen to fit the exact solution $u(x_1, x_2) = x_1^3(1-x_1)^3x_2^3(1-x_2)^3$.

First, we apply our scheme \eqref{grs4ebc} on regular pattern uniform  triangular mesh.
The corresponding numerical results are listed in Table
\ref{tab:neilanregular}.
It shows that  numerical solution $u_h$ converges to the exact solution $u$
at a rate of $O(h)$ in
recovered $H^3$ norm. 
Also from this table, we observe that $De$ and $D^1_re$ converge
at a rate of $O(h^2)$ while $D^1e$ and $D^2e$ converge at  a rate of $O(h)$.
Note that convergence rate
of $De$ and $D^1_re$ is better than that proved in Theorems \ref{thm:h1error} and \ref{thm:l2error}.

\begin{table}[htb!]
\centering
\footnotesize
\caption{Numerical Results Of Example 1 On Regular Pattern Uniform Mesh}\begin{tabular}{|c|c|c|c|c|c|c|c|c|c|c|}
\hline
 Dof & $De$ & order& $D^{1}e$ & order& $D^{1}_re$ & order& $D^2e$ &  order& $ D^3e$ & order\\ \hline\hline
 1089 &5.61e-06&--&5.76e-05&--&2.57e-05&--&4.01e-04&--&4.46e-03&--\\ \hline
 4225 &1.54e-06&0.95&2.20e-05&0.71&7.03e-06&0.96&1.73e-04&0.62 &2.06e-03&0.57 \\ \hline
 16641 &3.99e-07&0.99&9.65e-06&0.60&1.83e-06&0.98&8.18e-05&0.54 &9.99e-04&0.53 \\ \hline
 66049 &1.01e-07&0.99&4.62e-06&0.53&4.66e-07&0.99&4.03e-05&0.51 &4.93e-04&0.51 \\ \hline
\end{tabular}
\label{tab:neilanregular}
\end{table}

Secondly, we test our scheme on uniform triangular meshes of other patterns, including the chevron, Criss-cross, and Union-Jack patterns.
Numerical data are listed in \ref{tab:neilanchevron},
Table \ref{tab:neilancrisscross}, and Table \ref{tab:neilanunionjack}, respectively.
 Again, we observed $O(h^2)$ for $De$, $O(h)$ for $D^1e$, $O(h^2)$ for $D^1_re$,  $O(h)$ for
 $D^2e$, and $O(h)$ for $D^3e$,   the same as the regular pattern. 

\begin{table}[htb!]
\centering
\footnotesize
\caption{Numerical Results Of Example 1 On Chevron Pattern Uniform Mesh}\begin{tabular}{|c|c|c|c|c|c|c|c|c|c|c|}
\hline
 Dof & $De$ & order& $D^{1}e$ & order& $D^{1}_re$ & order& $D^2e$ &  order& $ D^3e$ & order\\ \hline\hline
 1089 &4.48e-06&--&6.00e-05&--&2.02e-05&--&3.81e-04&-- &4.26e-03&-- \\ \hline
 4225 &1.25e-06&0.94&2.22e-05&0.73&5.65e-06&0.94&1.69e-04&0.60 &2.04e-03&0.54 \\ \hline
 16641 &3.24e-07&0.99&9.67e-06&0.61&1.47e-06&0.98&8.14e-05&0.53 &9.97e-04&0.52 \\ \hline
 66049 &8.24e-08&0.99&4.62e-06&0.54&3.75e-07&0.99&4.03e-05&0.51 &4.92e-04&0.51 \\ \hline
\end{tabular}
\label{tab:neilanchevron}
\end{table}

\begin{table}[htb!]
\centering
\footnotesize
\caption{Numerical Results Of Example 1 On Criss-cross Pattern Uniform Mesh}\begin{tabular}{|c|c|c|c|c|c|c|c|c|c|c|}
\hline
 Dof & $De$ & order& $D^{1}e$ & order& $D^{1}_re$ & order& $D^2e$ &  order& $ D^3e$ & order\\ \hline\hline
 2113 &1.50e-05&--&2.91e-03&--&3.13e-05&--&3.61e-03&-- &7.21e-03&-- \\ \hline
 8321 &3.83e-06&1.00&1.48e-03&0.50&8.02e-06&0.99&1.83e-03&0.50 &3.62e-03&0.50 \\ \hline
 33025 &9.46e-07&1.01&7.20e-04&0.52&2.02e-06&1.00&8.98e-04&0.52 &1.81e-03&0.50 \\ \hline
 131585 &2.39e-07&0.99&3.64e-04&0.49&5.09e-07&1.00&4.55e-04&0.49 &9.07e-04&0.50 \\ \hline
\end{tabular}
\label{tab:neilancrisscross}
\end{table}

\begin{table}[htb!]
\centering
\footnotesize
\caption{Numerical Results Of Example 1 On Unionjack Pattern Uniform Mesh}\begin{tabular}{|c|c|c|c|c|c|c|c|c|c|c|}
\hline
 Dof & $De$ & order& $D^{1}e$ & order& $D^{1}_re$ & order& $D^2e$ &  order& $ D^3e$ & order\\ \hline\hline
 1089 &2.61e-05&--&3.24e-03&--&6.30e-05&--&4.33e-03&-- &9.40e-03&-- \\ \hline
 4225 &6.53e-06&1.02&1.61e-03&0.51&1.59e-05&1.02&2.12e-03&0.53 &4.52e-03&0.54 \\ \hline
 16641 &1.63e-06&1.01&8.03e-04&0.51&4.00e-06&1.01&1.06e-03&0.51 &2.22e-03&0.52 \\ \hline
 66049 &4.08e-07&1.01&4.01e-04&0.50&1.00e-06&1.00&5.26e-04&0.50 &1.10e-03&0.51 \\ \hline
\end{tabular}
\label{tab:neilanunionjack}
\end{table}

Finally, we turn to  the Delaunay mesh.
The first level coarse mesh is generated by EasyMesh \cite{easymesh}
 followed by three levels of regular refinement.
Table \ref{tab:neilandelaunay} presents the convergence history for the five different errors.
$O(h^2)$ and $O(h)$ convergence rates
are observed for  $L_2$ and $H_1$ errors.    As for  the $L_2$ error of recovered
gradient,  $O(h^2)$ superconvergence is observed.
Regarding recovered $H_2$ and $H_3$ errors,  $O(h)$ convergence are observed .

 In summary, we see that our method converges with optimal rates on all four tested uniform meshes as well as the Delaunay mesh.

 To show the efficiency of  our method, we make some numerical comparison
with  the cubic $C^0$ interior penalty method \cite{GudiNeilan2011} on the same Delaunay meshes.
Table \ref{tab:c0ip} shows numerical results of  the $C^0$ interior penalty method in the $L_2$ norm and the energy norm
defined in \cite{GudiNeilan2011}.
 Consisting with the theoretical result established  in \cite{GudiNeilan2011}, the  error in the energy norm  converges linearly
 and the $L_2$ error decays  at rate $O(h^2)$.

Figures \ref{fig:compareh3} and  \ref{fig:comparel2}  depict convergent rates of these two methods(i.e. our method and the $C^0$ interior penalty method)  under the discrete $H_3$ (the energy) and $L_2$ norms.
The rates are almost the same.  However, to achieve the same accuracy, our algorithm uses about one-eighth degrees of freedom of the  $C^0$ interior penalty method.

\begin{table}[htb!]
\centering
\footnotesize
\caption{Numerical Results of Example 1 on  Delaunay Triangulation with Regular Refinement}\begin{tabular}{|c|c|c|c|c|c|c|c|c|c|c|}
\hline
 Dof & $De$ & order& $D^{1}e$ & order& $D^{1}_re$ & order& $D^2e$ &  order& $ D^3e$ & order\\ \hline\hline
 513 &1.08e-05&--&9.72e-05&--&4.93e-05&--&6.21e-04&--&6.34e-03&-- \\ \hline
 1969 &3.02e-06&0.95&3.04e-05&0.86&1.38e-05&0.95&2.41e-04&0.70 &2.90e-03&0.58 \\ \hline
 7713 &7.94e-07&0.98&1.23e-05&0.66&3.65e-06&0.97&1.08e-04&0.59 &1.37e-03&0.55 \\ \hline
 30529 &2.03e-07&0.99&5.77e-06&0.55&9.35e-07&0.99&5.20e-05&0.53 &6.67e-04&0.52 \\ \hline
\end{tabular}
\label{tab:neilandelaunay}
\end{table}

\begin{table}[htb!]
\centering
\caption{$C^0$ Interior Penalty Method for Example 1 on  Delaunay Triangulation with Regular Refinement
}
\begin{tabular}{|c|c|c|c|c|c|c|c|c|c|c|}
\hline
 Dof & $De$ & order&  $ D^3_he$ & order\\ \hline\hline
 4369 &7.75e-06&--&6.25e-03&--\\ \hline
 17233 &2.72e-06&0.76 &2.92e-03&0.55 \\ \hline
 68449 &7.89e-07&0.90 &1.37e-03&0.55 \\ \hline
 272833 &1.58e-07&1.16&6.63e-04&0.52 \\ \hline
\end{tabular}
\label{tab:c0ip}
\end{table}

\begin{figure}[!h]
  \centering
  \begin{minipage}[c]{0.5\textwidth}
  \centering
    \includegraphics[width=\textwidth]{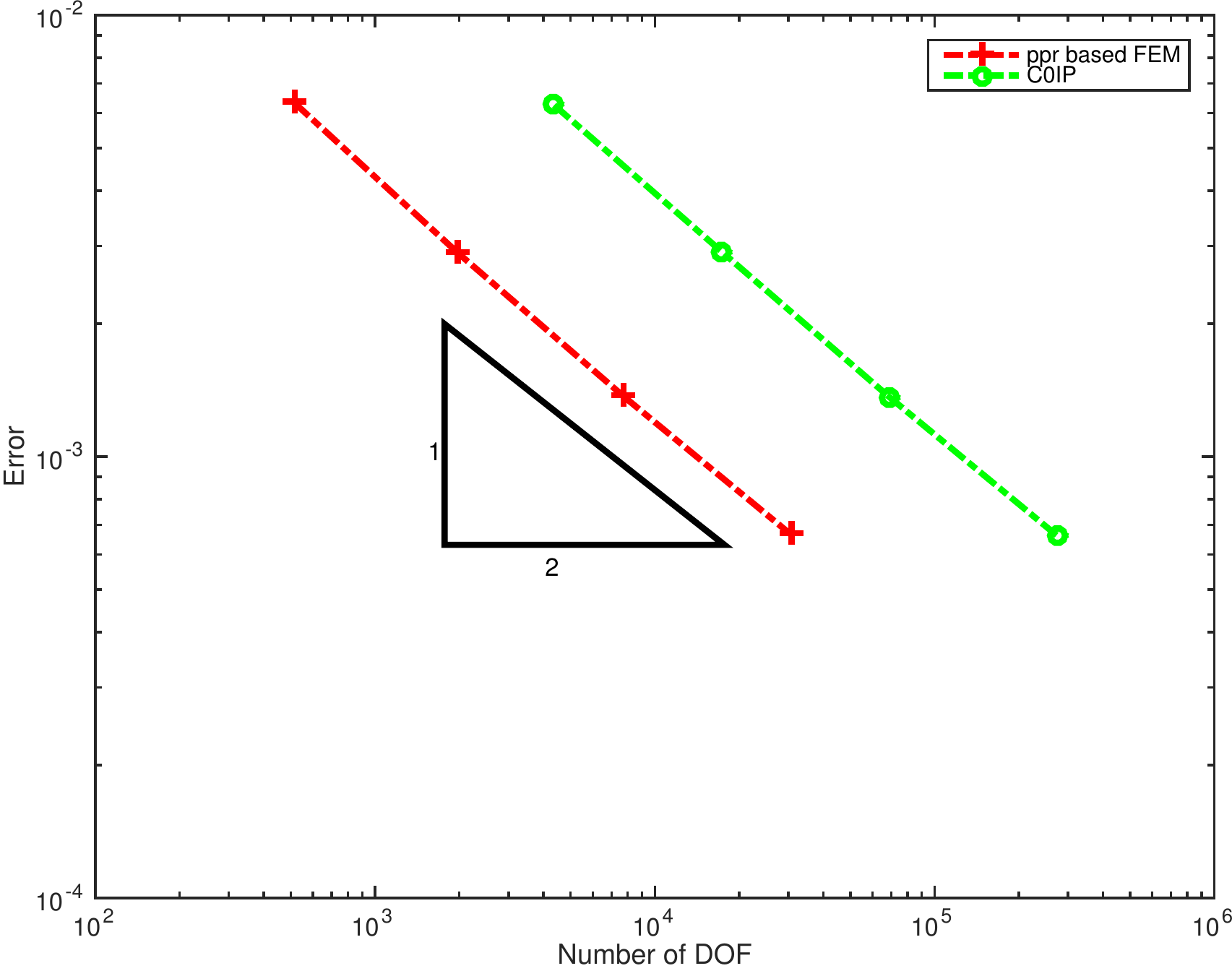}
     \caption{Comparison  of Discrete $H_3$ Errors for Example 1}
\label{fig:compareh3}
\end{minipage}%
 \begin{minipage}[c]{0.5\textwidth}
  \centering
    \includegraphics[width=\textwidth]{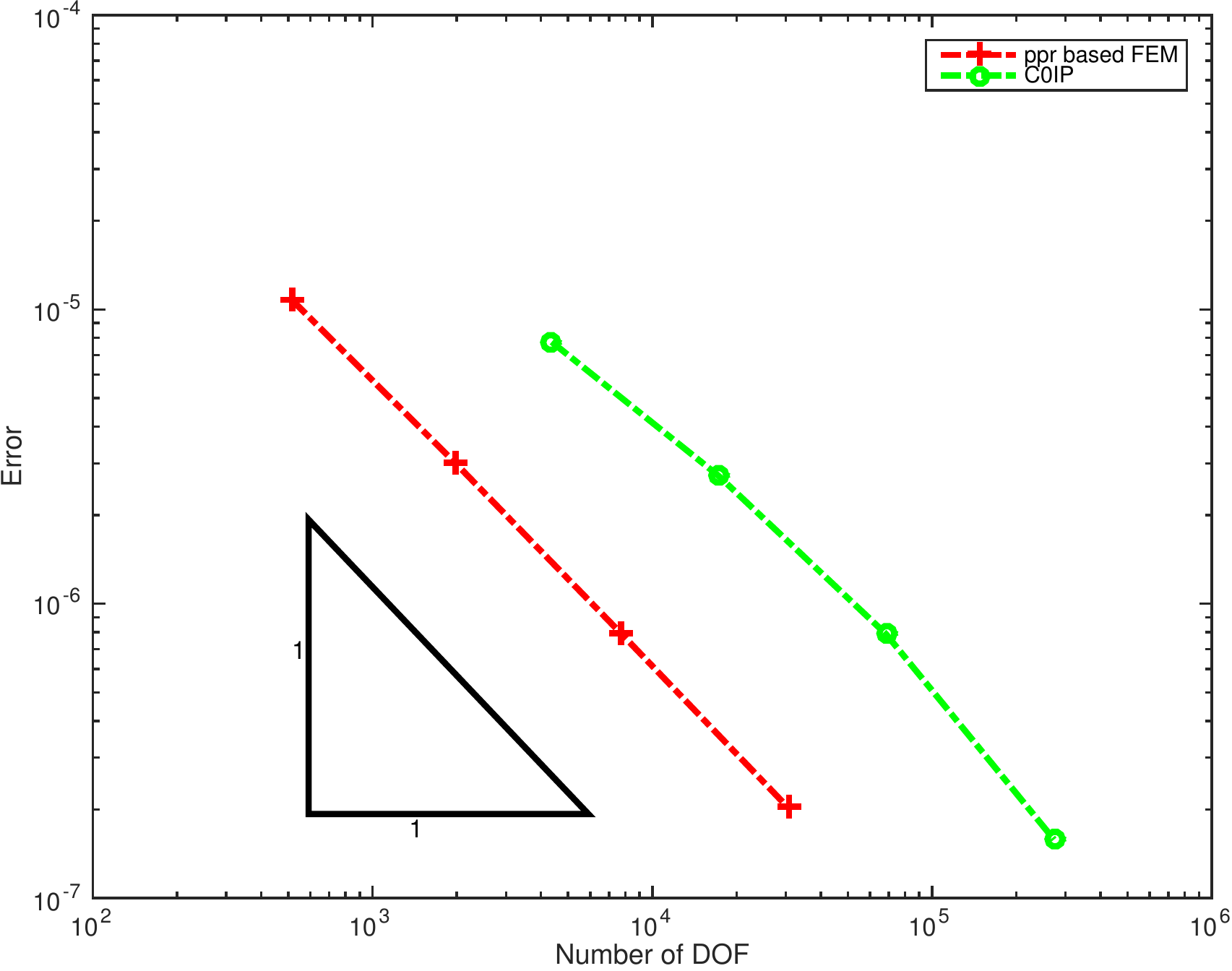}
   \caption{Comparison of  Discrete $L_2$ Errors for Example 1}
\label{fig:comparel2}
\end{minipage}
\end{figure}

{\bf Example 2.} In the second example, we will
show that our scheme  works well also
for problems nonhomogeneous boundary conditions.
We consider the  equation
\begin{equation*}
-\Delta^3 u = \sin(2\pi x_1)\cos(2\pi x_2),(x_1,x_2)\in[0,1]^2
\end{equation*}
whose exact solution is
\begin{equation*}
u(x_1, x_2) = \frac{1}{512\pi^6}\sin(2\pi x_1)\cos(2\pi x_2),
\end{equation*}
It provides nonhomogeneous boundary conditions
$u|_{\partial\Omega}, \partial_{\bf n}u|_{\partial\Omega}, \partial^2_{\bf nn} u|_{\partial\Omega}$.

As in Example 1, we first test our algorithm on regular pattern uniform triangular mesh and list
the numerical results in Table \ref{tab:sinregular}.   Again,
$D^3e$ decays at rate $O(h)$.
As expected, both $De$ and $D^1_re$ converges with order $O(h^2)$. Astonishingly, both $D^1e$ and $D^2e$ also converge quadratically. Namely,
for this example, both $D^1e$ and $D^2e$ superconverge.
\begin{table}[htb!]
\centering
\footnotesize
\caption{Numerical Results Of Example 2 On Regular Pattern Uniform Mesh}\begin{tabular}{|c|c|c|c|c|c|c|c|c|c|c|}
\hline
 Dof & $De$ & order& $D^{1}e$ & order& $D^{1}_re$ & order& $D^2e$ &  order& $ D^3e$ & order\\ \hline\hline
 1089 &2.50e-07&--&3.21e-05&--&1.53e-06&--&1.35e-04&-- &2.25e-03&-- \\ \hline
 4225 &2.66e-08&1.65&7.09e-06&1.11&1.47e-07&1.73&2.93e-05&1.13 &7.65e-04&0.80 \\ \hline
 16641 &3.01e-09&1.59&1.46e-06&1.15&1.92e-08&1.48&6.75e-06&1.07 &3.24e-04&0.63 \\ \hline
 66049 &4.75e-10&1.34&3.32e-07&1.08&3.41e-09&1.25&1.76e-06&0.98 &1.37e-04&0.62 \\ \hline
\end{tabular}
\label{tab:sinregular}
\end{table}

We then  consider chevron pattern uniform  triangular mesh.
Table \ref{tab:sinchevron} clearly indicates that $u_h$ converges to $u$
at a rate of $O(h^2)$ under the $L^2$ norm,
at a rate of $O(h)$ under the $H^1$ norm
and the recovered $H^2$ and $H^3$  norms.
Moreover, the recovery gradient $G_hu_h$ converges to $\nabla u$ at a rate
of $O(h^2)$.
We also  test our algorithms on   Delaunay meshes as
in the previous example. The numerical  data are demonstrated in Table \ref{tab:sindelaunay}.
Similar to what  we observed in Chevron pattern uniform  triangular mesh, the computed error by our method
converges to 0
with optimal rates under various norms.

In addition,  we have tested  our algorithms  on other two types (Criss-cross and Union-Jack pattern)
uniform triangular meshes.
Since the numerical results are similar to  the corresponding parts in the previous example, they are not reported here.
\begin{table}[htb!]
\centering
\footnotesize
\caption{Numerical Results of Example 2 on Chevron Pattern Uniform Mesh}\begin{tabular}{|c|c|c|c|c|c|c|c|c|c|c|}
\hline
 Dof & $De$ & order& $D^{1}e$ & order& $D^{1}_re$ & order& $D^2e$ &  order& $ D^3e$ & order\\ \hline\hline
 1089 &4.39e-08&--&1.08e-06&--&4.20e-07&--&1.23e-05&--&2.70e-04&-- \\ \hline
 4225 &3.38e-09&1.89&4.47e-07&0.65&3.92e-08&1.75&3.65e-06&0.90 &9.95e-05&0.74 \\ \hline
 16641 &7.89e-10&1.06&2.22e-07&0.51&7.57e-09&1.20&1.61e-06&0.60 &4.13e-05&0.64 \\ \hline
 66049 &2.00e-10&1.00&1.11e-07&0.50&1.83e-09&1.03&7.84e-07&0.52 &1.78e-05&0.61 \\ \hline
\end{tabular}
\label{tab:sinchevron}
\end{table}

\begin{table}[htb!]
\centering
\footnotesize
\caption{Numerical Results of Example 2 on Delaunay Triangulation with Regular Refinement}\begin{tabular}{|c|c|c|c|c|c|c|c|c|c|c|}
\hline
 Dof & $De$ & order& $D^{1}e$ & order& $D^{1}_re$ & order& $D^2e$ &  order& $ D^3e$ & order\\ \hline\hline
 513 &5.52e-08&--&1.77e-06&--&3.81e-07&--&1.29e-05&-- &1.44e-04&-- \\ \hline
 1969 &1.17e-08&1.15&5.43e-07&0.88&8.07e-08&1.15&4.21e-06&0.83 &5.78e-05&0.68 \\ \hline
 7713 &2.79e-09&1.05&2.57e-07&0.55&1.97e-08&1.03&1.90e-06&0.58 &1.94e-05&0.80 \\ \hline
 30529 &6.86e-10&1.02&1.27e-07&0.51&4.91e-09&1.01&9.40e-07&0.51 &8.52e-06&0.60 \\ \hline
\end{tabular}
\label{tab:sindelaunay}
\end{table}

Once again, we present a numerical comparison with the $C^0$ interior penalty method.
We see from Figure \ref{fig:h3comp} that the convergence rates of $H_3$ error are comparable, however, our method requires much less
degrees of freedom in order to achieve the same accuracy.
Figure \ref{fig:l2comp} indicates that our method is  slightly better than the $C^0$ interior penalty method
with regard to  the $L_2$ norm which is suboptimal.  Here we would like to point out that the error of
the $C^0$ interior penalty method is  sensitive to  the penalty parameter.

\begin{figure}[!h]
  \centering
  \begin{minipage}[c]{0.5\textwidth}
  \centering
    \includegraphics[width=\textwidth]{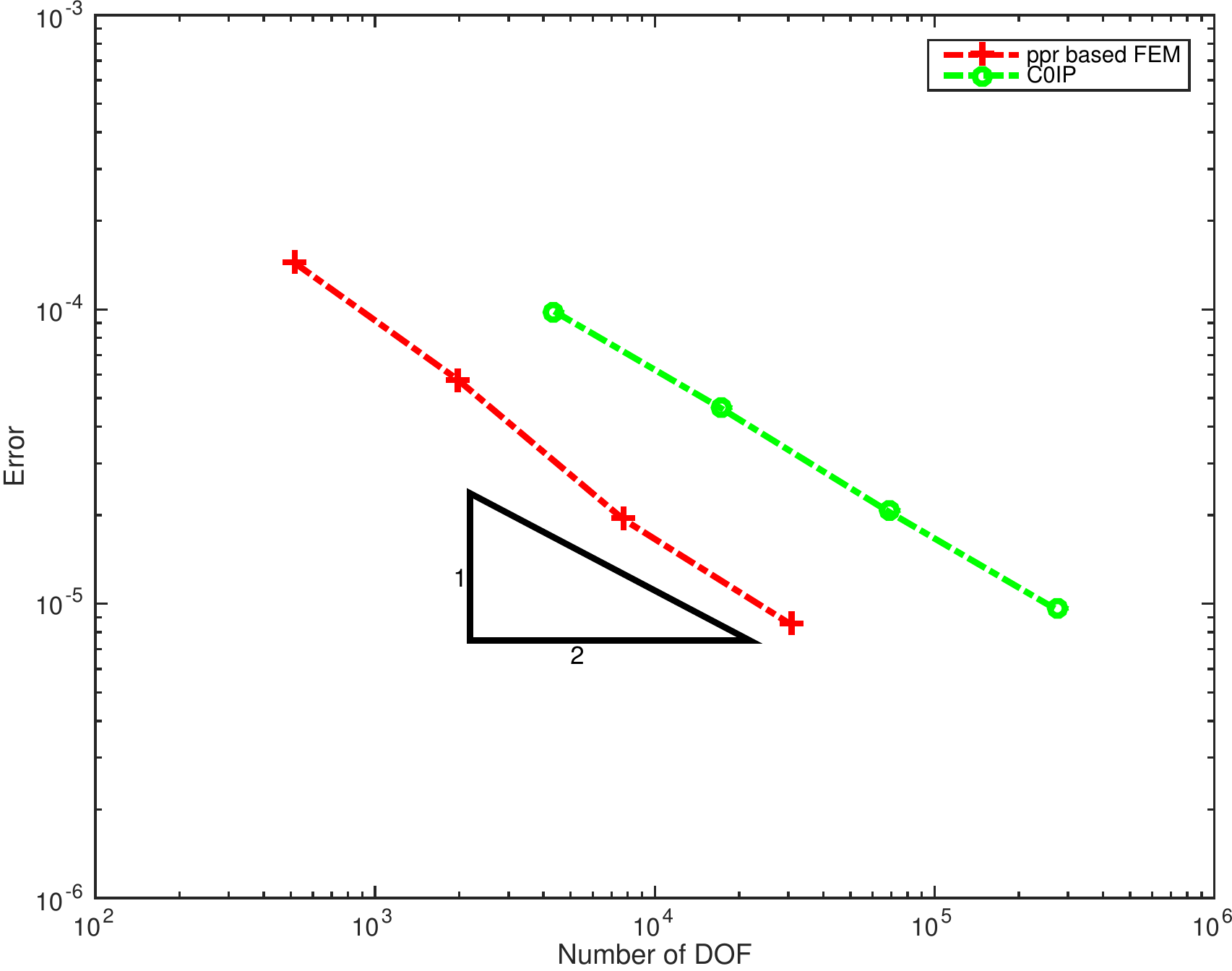}
     \caption{Comparison  of Discrete $H_3$ Errors for Example 2}
\label{fig:h3comp}
\end{minipage}%
 \begin{minipage}[c]{0.5\textwidth}
  \centering
    \includegraphics[width=\textwidth]{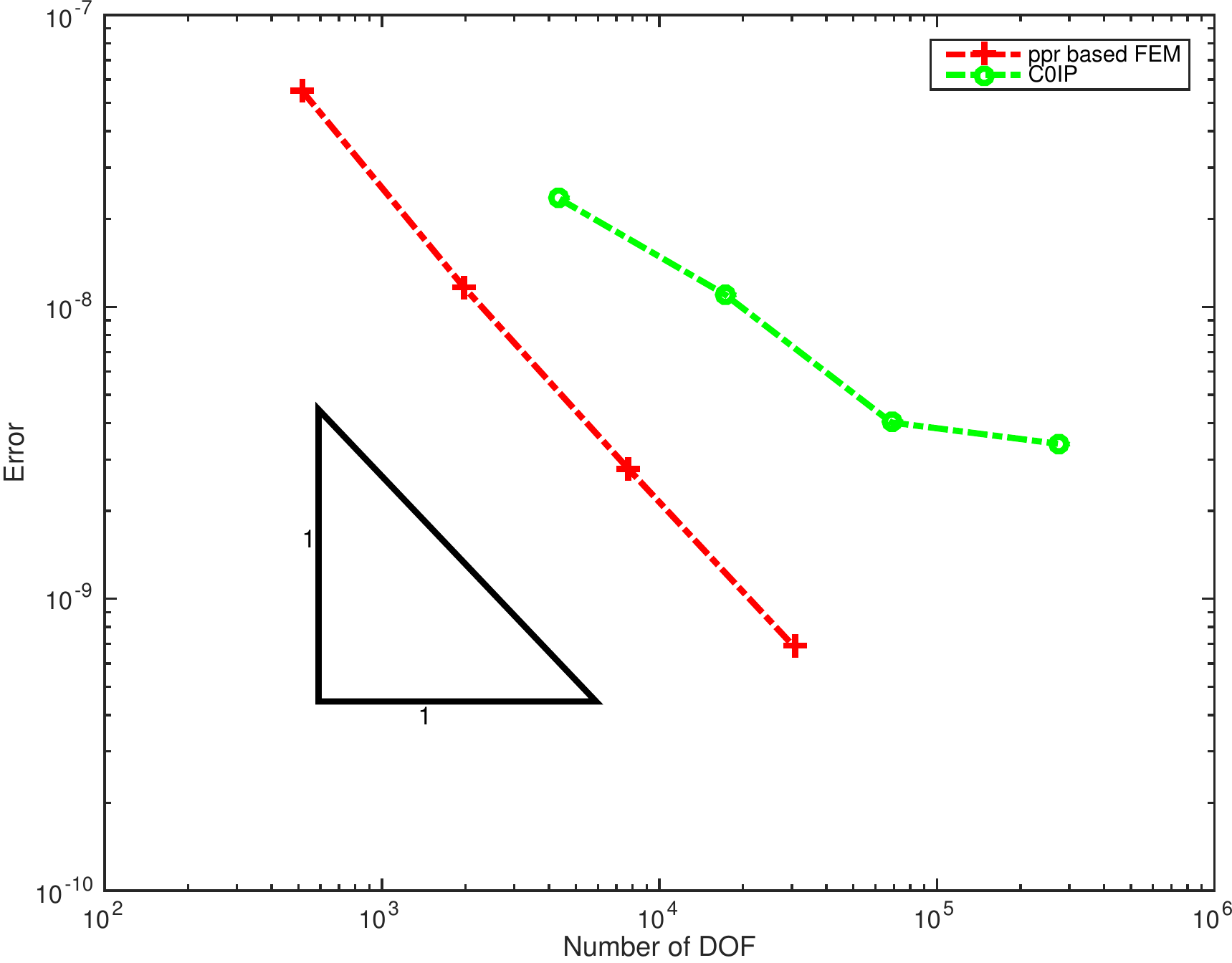}
   \caption{Comparison of  Discrete $L_2$ Errors for Example 2}
\label{fig:l2comp}
\end{minipage}
\end{figure}

{\bf Example 3.} In previous two examples, we consider sixth order elliptic  equations  on the unit square. To show the ability  of dealing arbitrary
complex domain, we consider the following sixth order partial differential equation
\begin{equation*}
-\Delta^3 u =  8 e^{x_1+x_2}.
\end{equation*}
on the unit disk, i.e. $\Omega = \{(x_1,x_2) \in \mathbb{R}^2: x_1^2+x_2^2 \le 1\}$.
The exact solution is
\begin{equation*}
u(x_1, x_2) = e^{x_1+x_2}.
\end{equation*}
and the corresponding boundary conditions are given by the exact solution.
The initial mesh is generated by  DistMesh \cite{PerssonStrang2004} as shown
in  Figure \ref{fig:circle}.
The other seven level meshes are obtained by refining the initial mesh using regular refinement.
The numerical results are reported in Table \ref{tab:circle}.
As in two previous examples,
$O(h)$ convergence for $D^3e$ ,  $D^2e$, and $D^1e$ are observed and
 $O(h^2)$  convergence order are observed for
$De$ and $D^1_re$.

\begin{figure}[!h]
  \centering
  \begin{minipage}[c]{0.5\textwidth}
  \centering
    \includegraphics[width=0.9\textwidth]{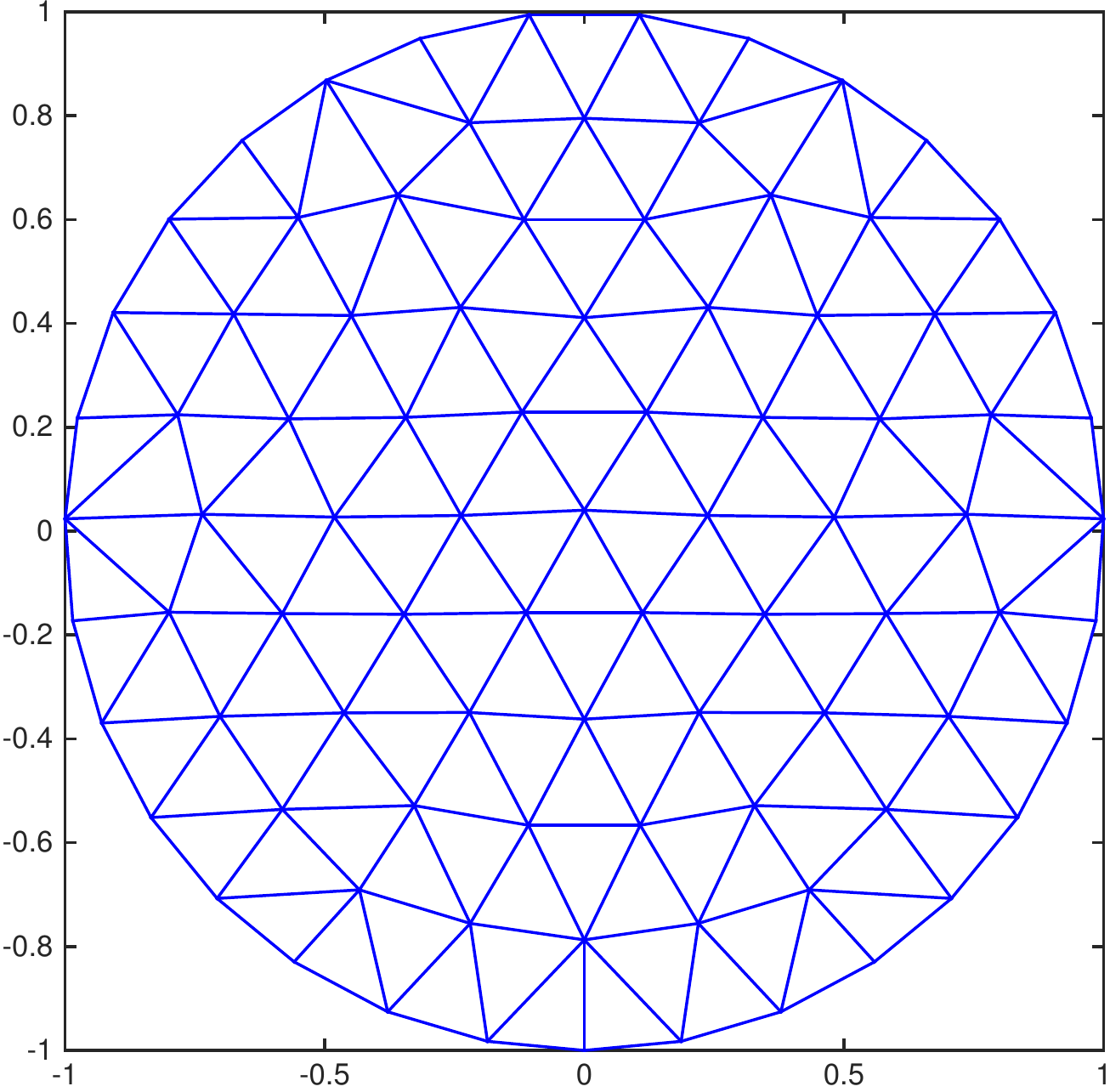}
   \caption{Initial Mesh On The Unit Disk}
\label{fig:circle}
\end{minipage}%
 \begin{minipage}[c]{0.5\textwidth}
  \centering
    \includegraphics[width=0.9\textwidth]{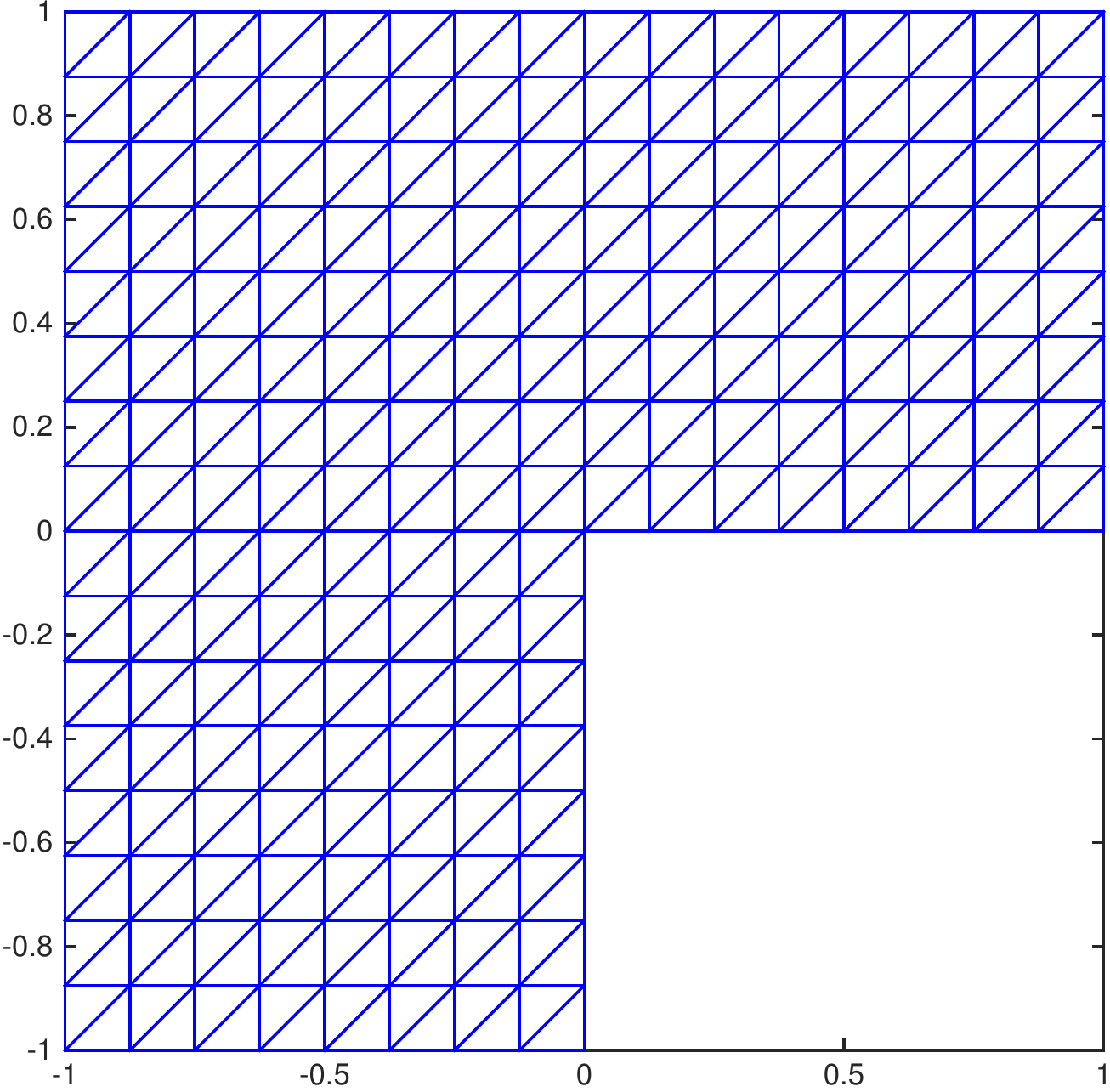}
   \caption{Initial Mesh On The Lshape Domain}
\label{fig:lshape}
\end{minipage}
\end{figure}

\begin{table}[htb!]
\centering
\footnotesize
\caption{Numerical Results  of  Sixth Order PDE  in the Unit Disk}
 \begin{tabular}{|c|c|c|c|c|c|c|c|c|c|c|}
\hline
 Dof & $De$ & order& $D^{1}e$ & order& $D^{1}_re$ & order& $D^2e$ &  order& $ D^3e$ & order\\ \hline\hline
 88 &3.69e-02&--&4.76e-01&--&1.38e-01&--&1.01e+00&-- &3.70e+00&-- \\ \hline
 318 &1.44e-02&0.73&1.91e-01&0.71&3.21e-02&1.14&3.48e-01&0.83 &1.92e+00&0.51 \\ \hline
 1207 &1.84e-03&1.54&8.05e-02&0.65&4.76e-03&1.43&1.06e-01&0.89 &7.14e-01&0.74 \\ \hline
 4701 &4.16e-04&1.09&4.00e-02&0.51&1.20e-03&1.01&4.94e-02&0.56 &3.66e-01&0.49 \\ \hline
 18553 &1.04e-04&1.01&2.00e-02&0.51&3.03e-04&1.00&2.40e-02&0.52 &1.99e-01&0.44 \\ \hline
 73713 &2.60e-05&1.00&1.00e-02&0.50&7.57e-05&1.01&1.19e-02&0.51 &9.70e-02&0.52 \\ \hline
 293857 &7.01e-06&0.95&5.00e-03&0.50&1.85e-05&1.02&5.94e-03&0.50 &4.62e-02&0.54 \\ \hline
\end{tabular}
\label{tab:circle}
\end{table}

{\bf Example 4.} As in  \cite{HuZhang2015}, we consider the following triharmonic equation
\begin{equation*}
-\Delta^3 u =  0.
\end{equation*}
on the L-shaped domain $[-1,1]^2\setminus([0,1]\times[-1,0])$ with boundary conditions such that the problem
 has the exact solution
\begin{equation*}
u(x_1, x_2) = x_1^6-x_2^6.
\end{equation*}
Here we use uniform meshes.  The initial mesh is plotted in Figure \ref{fig:lshape}, while
our numerical results are listed in Table \ref{tab:lshape}.
As pointed out in \cite{HuZhang2015}, the solution $u$ varies fast near the boundary.
Even in that case,
we  observe the optimal convergence rates under  all the  norms.

\begin{table}[htb!]
\centering
\footnotesize
\caption{Numerical Results For Triharmonic Equation On LShape Domain}
 \begin{tabular}{|c|c|c|c|c|c|c|c|c|c|c|}
\hline
 Dof & $De$ & order& $D^{1}e$ & order& $D^{1}_re$ & order& $D^2e$ &  order& $ D^3e$ & order\\ \hline\hline
 225 &2.04e-01&--&5.78e+00&--&8.10e-01&--&1.42e+01&-- &7.58e+01&-- \\ \hline
 833 &2.63e-02&1.56&1.69e+00&0.94&1.24e-01&1.43&4.12e+00&0.95 &2.96e+01&0.72 \\ \hline
 3201 &3.54e-03&1.49&4.20e-01&1.03&2.70e-02&1.14&1.42e+00&0.79 &1.35e+01&0.58 \\ \hline
 12545 &6.66e-04&1.22&1.39e-01&0.81&6.67e-03&1.02&5.71e-01&0.66 &6.70e+00&0.52 \\ \hline
 49665 &1.44e-04&1.12&5.95e-02&0.62&1.71e-03&0.99&2.62e-01&0.57 &3.35e+00&0.50 \\ \hline
 197633 &3.35e-05&1.05&2.82e-02&0.54&4.55e-04&0.96&1.27e-01&0.52 &1.68e+00&0.50 \\ \hline
\end{tabular}
\label{tab:lshape}
\end{table}

In summary, our numerical experiments discover that  our algorithm
converges with optimal rates under various norms, for sixth order
equations on different kinds of domains, with homogenous or
nonhomogeneous boundary conditions. In addition, comparing to
some existed algorithm such as $C^0$ interior penalty method,
our algorithm has much lower computational cost.

\section{Concluding remarks}
 In this work, we developed a PPR based discretization algorithm for a  sixth-order PDE.
The algorithm has a simple form and is easy to implement. Moreover, it has optimal convergence rates
as the existing conforming and nonconforming FEMs in the literatures for sixth-order PDEs.
However, the  new method seems to be more advantageous with respect to computational complexity.

Generally speaking, the recovery operator is  a special difference operator on nonuniform grids.
It can be used to {\it compute} high order derivatives of a function which are piecewise polynomials
but only globally in $C^0$ and thus can be used to discretize  PDEs of higher order.
On the other hand, how to choose different recovery operators for different PDEs  deserves more in-depth
mathematical study. Further investigation is called for to find simple and efficient
 algorithms for complicated PDEs.

\end{document}